\tikzset{snake it/.style={decorate, decoration=snake}}
\DeclareSymbolFont{bbold}{U}{bbold}{m}{n}
\DeclareSymbolFontAlphabet{\mathbbold}{bbold}
\newtheoremstyle{mytheorem}
{3pt}
{3pt}
{\itshape}
{}
{\bf}
{.}
{.5em}
{}
\theoremstyle{mytheorem}
\newtheorem{thm}{Theorem}[section]
\newtheorem{cor}[thm]{Corollary}
\theoremstyle{definition}
\newtheorem{example}[thm]{Example}
\theoremstyle{remark}
\newcommand{\CC}{{\mathbb C}}
\newcommand{\QQ}{\mathbb{Q}}
\DeclareFontFamily{OT1}{rsfs}{}
\DeclareFontShape{OT1}{rsfs}{n}{it}{<-> rsfs10}{}
\DeclareMathAlphabet{\curly}{OT1}{rsfs}{n}{it}
\newcommand{\ZZ}{\mathbb{Z}}
\newcommand{\rk}{\operatorname{rk}}
\begin{document}

\title{Configuration Spaces of \texorpdfstring{$\mathbb{C} \setminus k$}{C-k}}
	
\date{\today}

\author{Christoph Schiessl}
\address{ETH Z\"urich, Department of Mathematics}
\email{christoph.schiessl@math.ethz.ch}

\thanks{ The author was supported by the grant ERC-2012-AdG-320368-MCSK.
	I want to thank Emanuele Delucchi, Emmanuel Kowalski, Johannes Schmitt, Junliang Shen for very helpful discussions and especially Rahul Pandharipande for his invaluable support.
	This is part of the author's PhD thesis.
}

\begin{abstract}
In this note, we collect mostly known formulas and methods to compute the standard and virtual Poincaré polynomials of the configuration spaces of the plane $\CC \setminus k$ with $k$ deleted points and compare the answers.
\end{abstract}

\baselineskip=14.5pt

\maketitle

\tableofcontents
For any complex quasi-projective algebraic variety $X$, the \emph{virtual Poincaré polynomial} $S(X) \in \mathbb{Z}[x]$ is defined \cite{newton}, \cite{totaroserre} by the properties 
\begin{itemize}
\item $S(X) = \sum \rk H^i(X) \, x^i$ for smooth, projective $X$,
\item $S(X) = S(X \setminus C) +S(C)$ for a closed subvariety  $C \subset X$,
\item $S(X \times Y ) = S(X) S(Y)$.
\end{itemize}
In contrast, we write \[ P(X) = \sum \rk H^i(X) x^i \] for the standard Poincaré polynomial.

For any space $X$, the ordered configuration space 
\[ F_n(X ) = \{ x_1, \dots, x_n \in X^n | \, x_i \neq x_j \} \] is the space of $n$ distinct points in $X$. The symmetric group $S_n$ acts on $F_n(X)$ by permuting the points and the quotient \[ C_n(X) = F_n(X) / S_n \] is the unordered configuration space. Computing their cohomology is a classical, hard problem. As the configuration space is the complement of diagonals, determining the virtual Poincaré polynomials is simpler and was done for example by Getzler \cite{getzler} \cite{getzler2}.

We look at the cohomology of ordered and unordered configuration spaces of $\mathbb{C} \setminus k$. We compute their normal and virtual Poincaré Polynomials by existing methods and see that Stirling and pyramidal numbers show up. The calculation for $C_n(\CC \setminus k)$ seems not to be in the literature in this form.

\section{Pyramidal Numbers}
The \emph{$k$-dimensional pyramidal numbers} are integers $P_{k,i}$ for $i \ge -1$, $k \ge -1$. They satisify the recursions \begin{align*} P_{-1, i} = \begin{cases} 1 & i=0 \\ 0 & \text{otherwise} \end{cases} & & P_{k+1, i} = \sum_{j=0}^{i} P_{k, j}. \end{align*} An equivalent recursion would be \begin{align*} P_{k,0}= 1 & & P_{k+1, i+1} = P_{k,i+1}+P_{k+1,i}. \end{align*} Some examples are \begin{align*} P_{0,i} = 1 & & P_{1,i} = i+1 & & P_{2,i} = \frac{(i+1)(i+2)}{2}.\end{align*} 
	\begin{table}[h]
		\caption*{Some pyramidal numbers $P_{k,i}$:}
		\centering
	\begin{tabular}{c|ccccc}
	\diagbox{k}{i}        & 0 & 1 & 2 & 3 & 4 \\
		\midrule
			       -1 & 1 & 0 & 0 & 0 & 0 \\
				0 & 1 & 1 & 1 & 1 & 1 \\
				1 & 1 & 2 & 3 & 4 & 5 \\
				2 & 1 & 3 & 6 & 10 & 15 \\
				3 & 1 & 4 & 10 & 20 & 35
	\end{tabular}
\end{table}
	
The recursion allows us to compute the generating function \[ \sum P_{k, i} x^i = ( 1+x+x^2+x^3+x^4+ \dots)^{k+1}= \frac{1}{(1-x)^{k+1}}. \] By standard manipulation of generating series for $k \ge0$: \[ \frac{1}{(1-x)^{k+1}} = \frac{1}{k!}\frac{d^k}{dx^k} \frac{1}{1-x} = \frac{1}{k!} \sum_{i \ge 0} (i+k) \dots (i+2)(i+1)x^i = \sum_{i \ge 0} \binom{i+k}{i} x^i \] 
The result \[ P_{k,i} = \binom{i+k}{i} \] also holds for $k=0$ and can be proved directly using the recursion \[ P_{k+1, i+1} = \binom{i+k+2}{i+1} = \binom{i+k+1}{i+1} + \binom{i+k+1}{i} = P_{k,i+1}+P_{k+1,i}.\]
The definition could be extended by setting \[ P_{k,i} =0 \text{ for } i<0.\] In this way, all recursions stay valid for $i <0$.

\section{Poincaré Polynomials of \texorpdfstring{$C_n(\mathbb{C} \setminus k)$}{}}

Let $M$ be a connected manifold. Napolitano \cite[Theorem 2]{napolitano} proved the following relation between the cohomology of unordered configuration spaces of $M \setminus 1$ and $M \setminus 2$:
\[ H^j(C_n(M  \setminus 2), \ZZ) = \bigoplus_{t=0}^n H^{j-t}(C_{n-t}(M \setminus 1, \ZZ)). \] We use the conventions \begin{align*} H^0(C_0(M \setminus 1), \ZZ) = \ZZ & & H^j(C_0(M \setminus 1), \ZZ)= 0 \text{ if } j > 0. \end{align*} In general, this relation does not hold between the cohomology of the configuration spaces of $M \setminus 1$ and $M$ as the proof works by pushing in points from the missing point.
\begin{thm}
	We have \[ \rk H^i(C_n(\CC \setminus k), \ZZ) = \begin{cases} P_{k-1, i} & i=n \\ P_{k-1,i}+ P_{k-1,i-1 } & 0 \le i < n \\ 0 & \text{otherwise} \end{cases} \]
or 
	\[ \sum_{ n \ge 0} P(C_n(\mathbb{C} \setminus k)) y^n = \frac{1+xy^2}{(1-y)(1-xy)^k}.\]
\end{thm}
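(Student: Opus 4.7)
The plan is to induct on $k$, using Napolitano's identity to step from $k-1$ to $k$ and Arnold's classical computation of $H^\ast(C_n(\mathbb{C}))$ as the base case. Throughout, the whole argument lives most naturally at the level of the two-variable generating function $f_k(x,y):=\sum_{n\ge 0}P(C_n(\mathbb{C}\setminus k))(x)\,y^n$, and the goal will be to identify $f_k$ with $(1+xy^2)/((1-y)(1-xy)^k)$.

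For the base case $k=0$, Arnold's computation of the rational cohomology of the braid group yields $P(C_n(\mathbb{C}))(x)=1$ for $n\le 1$ and $P(C_n(\mathbb{C}))(x)=1+x$ for $n\ge 2$. Summing gives
\[ f_0(x,y)=1+y+(1+x)\frac{y^2}{1-y}=\frac{1+xy^2}{1-y}, \]
matching the claim at $k=0$. For the inductive step, I would apply Napolitano's relation with $M=S^2$ when $k=1$ (so $M\setminus 1\cong\mathbb{C}$ and $M\setminus 2\cong\mathbb{C}\setminus 1$) and with $M=\mathbb{C}\setminus(k-2)$ when $k\ge 2$ (so $M\setminus 1\cong\mathbb{C}\setminus(k-1)$ and $M\setminus 2\cong\mathbb{C}\setminus k$). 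In either case, taking ranks in the decomposition translates to the polynomial recursion
\[ P(C_n(\mathbb{C}\setminus k))(x)=\sum_{t=0}^{n}x^t\,P(C_{n-t}(\mathbb{C}\setminus(k-1)))(x), \]
which upon multiplying by $y^n$ and summing is exactly convolution against $\sum_{t\ge 0}(xy)^t=1/(1-xy)$. Hence $f_k(x,y)=f_{k-1}(x,y)/(1-xy)$, and iterating gives the desired $f_k(x,y)=(1+xy^2)/((1-y)(1-xy)^k)$.

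The pointwise Betti number identity is then a routine coefficient extraction. Using the expansion $1/(1-xy)^k=\sum_{j\ge 0}P_{k-1,j}(xy)^j$ from the previous section and $1/(1-y)=\sum_m y^m$, the coefficient of $y^n$ in $1/((1-y)(1-xy)^k)$ is $\sum_{j=0}^n P_{k-1,j}x^j$. Multiplying by $1+xy^2$ adds a shifted copy of this polynomial, producing $P_{k-1,n}$ in top degree $i=n$ and $P_{k-1,i}+P_{k-1,i-1}$ in degrees $0\le i<n$ (with the convention $P_{k-1,-1}=0$), in exact agreement with the statement.

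The only nontrivial input is Arnold's rank computation together with the verification that Napolitano's identity applies with $M=S^2$; since the excerpt phrases it for an arbitrary connected manifold this should be automatic, but if one wishes to avoid the borderline case one can compute $P(C_n(\mathbb{C}^\ast))$ directly (for instance as the complement of the discriminant arrangement of type $B_n$) and bootstrap the induction from $k=1$ using only $M=\mathbb{C}\setminus(k-2)$ for $k\ge 2$. Everything else is formal bookkeeping in generating series and the elementary identity satisfied by the pyramidal numbers.
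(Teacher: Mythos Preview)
Your proof is correct and follows essentially the same route as the paper: Arnold's computation supplies the base case $Q_0=(1+xy^2)/(1-y)$, Napolitano's recursion (applied to $M=S^2\setminus k$, i.e.\ your choices $M=S^2$ for $k=1$ and $M=\mathbb{C}\setminus(k-2)$ for $k\ge 2$) yields $Q_{k}=Q_{k-1}/(1-xy)$, and the closed form follows; you simply spell out the coefficient extraction and the $M=S^2$ borderline more explicitly than the paper does.
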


\begin{proof}
	Write \[Q_k(x,y) = \sum_{n, i \ge 0} \rk H^i(C_n(\mathbb{C} -k), \ZZ) \, x^i y^n .\] Then applying Napolitano's recursion to $M=S^2 \setminus k+1$ we get 
	\begin{equation*} Q_{k+1}(x,y) = \frac{Q_k(x,y)}{1-xy}. \end{equation*}

Arnold's computation 
	\begin{align*} H^0(C_n(\CC), \QQ) = \QQ & & H^1(C_n(\CC), \QQ) = \QQ \text{ if } n \ge 2 & & H^i(C_n(\CC), \QQ) = 0 \text{ if } i \ge 2 \end{align*} 
	in \cite{Arn} provides initial values for $k=0$:
	\[ Q_0(x,y) = 1 + y + (1+x)y^2 +(1+x)y^3 + \dots = \frac{1+xy^2}{1-y} \]
	Hence we have shown 
	\[ Q_k(x,y) = \frac{1+xy^2}{(1-y)(1-xy)^k}.\]
	Expansion now proves the theorem.
\end{proof}

This theorem can also be deduced from \cite[Prop. 3.5]{knudsen}. As $C_1(\CC \setminus k ) = \CC \setminus k$, the reality check for $n=1$ works:
	\[ \rk H^j(C_1(\CC \setminus k), \ZZ)  = \begin{cases} 1 & \text{for }  j=0 \\ k & \text{for } j=1 \\ 0 & \text{otherwise} \end{cases}.\]
	We can conclude that $\rk H^j(C_n(\CC \setminus k), \ZZ)$ stabilizes (seen as a function of $n$) for $n>j$.
\begin{cor} \label{gen}
In the limit we get
	\[ \rk H^j(C_\infty(\CC \setminus k), \ZZ) = P_{k-1, j}+P_{k-1, j-1}\] or as a generating series
	\begin{equation*} P(C_\infty(\mathbb{C} \setminus k))= \frac{1+x}{(1-x)^{k}} . \end{equation*}
\end{cor}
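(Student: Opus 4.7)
The plan is to read the corollary off directly from the preceding theorem. For a fixed cohomological degree $j$, the theorem's case analysis shows that as soon as $n > j$ we lie in the range $0 \le j < n$, so
\[ \rk H^j(C_n(\CC \setminus k), \ZZ) = P_{k-1, j} + P_{k-1, j-1} \]
independently of $n$. This is precisely the coefficientwise stabilization highlighted in the remark preceding the corollary, and it pins down the limiting rank as $P_{k-1,j} + P_{k-1,j-1}$, giving the first displayed formula.

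For the generating series, I would then assemble these stable ranks into a power series in $x$ and appeal to the generating function already computed in Section 1:
\begin{align*}
P(C_\infty(\CC \setminus k)) &= \sum_{j \ge 0} \bigl( P_{k-1,j} + P_{k-1,j-1} \bigr)\, x^j \\
&= (1+x) \sum_{j \ge 0} P_{k-1,j}\, x^j \\
&= \frac{1+x}{(1-x)^k},
\end{align*}
where the second step uses the extended convention $P_{k-1,-1}=0$ to re-index the shifted sum, and the third step uses $\sum_{j \ge 0} P_{k-1,j}\, x^j = 1/(1-x)^k$ from Section 1.

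The whole argument amounts to bookkeeping; the only conceptual step is recognising that the theorem already delivers termwise stabilization in $n$, so one need not revisit the geometry of the configuration spaces or their stabilization maps. Accordingly, I do not anticipate any real obstacle — the corollary is a formal consequence of the theorem combined with the pyramidal-number generating function.
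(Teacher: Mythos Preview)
Your proof is correct and matches the paper's approach: the corollary is obtained simply by reading off the stable value $P_{k-1,j}+P_{k-1,j-1}$ from the theorem's middle case for $n>j$, then packaging it via the pyramidal-number generating function from Section~1. The paper additionally remarks that the generating-series identity can alternatively be rederived by a stable form of Napolitano's recursion, but this is offered as a side comment rather than the main argument.
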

Taking stability for granted, this can be deduced by the stable version of Napolitano's recursion:
\[H^j(C_\infty(\CC \setminus k+1), \ZZ) = \bigoplus_{t=0}^j \rk H^t(C_\infty(\CC \setminus k), \ZZ).\]
	Vershinin \cite[Cor. 11.1]{versh} showed that
	\[ H^*(C_\infty(\mathbb{C} \setminus k) \simeq H^*(\Omega^2 S^3) \otimes \left ( H^*(\Omega S^2) \right )^k\] extending the May-Segal formula \cite{segal1}, \cite[Th. 8.11]{versh} \[ H^*(C_\infty(\mathbb{C}) \simeq H^*(\Omega^2 S^3). \] Combining the results of Arnold and the cohomology of the loop spaces of a sphere \[ H^i(\Omega S^2) = \mathbb{Z} \] for $i \ge 0$ \cite[Example 1.5]{hatcher}), this gives back corollary (\ref{gen}).

\section{Poincaré Polynomials of \texorpdfstring{$F_n(\CC \setminus k)$}{}}

Arnold's calculation of $H^*(F_n(\mathbb{C}), \ZZ)$ can be extended to $H^*(F_n(\CC \setminus k), \ZZ)$ via the fiber bundles
\begin{equation*} \label{fiber} F_n(\CC \setminus k) \mapsto F_{n-1}(\CC \setminus k) \end{equation*} with fiber $ \CC \setminus (k+n-1)$.
	\begin{thm} \cite[Thm. 7.1]{vershcol}
We have \[P(F(\mathbb{C} \setminus k, n)) = (1+kx)(1+(k+1)x) \cdots (1+(n+k-1)x).\]
\end{thm}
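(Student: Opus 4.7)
The plan is to use the fibration
\[ \pi\colon F_n(\CC\setminus k)\longrightarrow F_{n-1}(\CC\setminus k),\qquad (z_1,\dots,z_n)\mapsto(z_1,\dots,z_{n-1}), \]
stated just before the theorem and to induct on $n$. The base case $n=1$ is immediate: $F_1(\CC\setminus k)=\CC\setminus k$ is homotopy equivalent to a wedge of $k$ circles, so $P(F_1(\CC\setminus k))=1+kx$, which matches the formula.

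For the inductive step I want to show that the Poincaré polynomial is multiplicative along $\pi$, i.e.
\[ P(F_n(\CC\setminus k))=P(F_{n-1}(\CC\setminus k))\cdot P(\CC\setminus(k+n-1))=P(F_{n-1}(\CC\setminus k))\cdot(1+(k+n-1)x). \]
Combined with the inductive hypothesis this immediately yields the claimed product expansion, so all the content lies in justifying this multiplicativity.

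I would justify it by Leray--Hirsch. Writing $a_1,\dots,a_k$ for the deleted points of $\CC$, the closed $1$-forms
\[ \omega_j=\tfrac{1}{2\pi i}\,d\log(z_n-z_j)\ (1\le j<n),\qquad \eta_l=\tfrac{1}{2\pi i}\,d\log(z_n-a_l)\ (1\le l\le k), \]
are globally defined on $F_n(\CC\setminus k)$ and on each fiber $\pi^{-1}(z_1,\dots,z_{n-1})\cong\CC\setminus\{a_1,\dots,a_k,z_1,\dots,z_{n-1}\}$ they restrict to the standard generators of $H^1$. Since the fiber is homotopy equivalent to a wedge of $k+n-1$ circles, its cohomology is free over $\ZZ$ and concentrated in degrees $0$ and $1$, generated by $1$ and the restrictions of these classes. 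Leray--Hirsch then gives an isomorphism of graded $H^*(F_{n-1}(\CC\setminus k))$-modules
\[ H^*(F_n(\CC\setminus k),\ZZ)\;\cong\;H^*(F_{n-1}(\CC\setminus k),\ZZ)\otimes_\ZZ H^*(\CC\setminus(k+n-1),\ZZ), \]
from which the multiplicativity of Poincaré polynomials follows.

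The main obstacle is precisely this production of global Arnold-type classes that restrict to a basis on each fiber; local triviality of $\pi$ (Fadell--Neuwirth) is standard and uses only the openness of $\CC\setminus k$ in $\CC$. Granted the Leray--Hirsch input above, the induction closes and delivers the stated formula, and as a byproduct exhibits $H^*(F_n(\CC\setminus k),\ZZ)$ as the free $\ZZ$-module spanned by products of the $\omega_j$ and $\eta_l$ taken over all $n$ stages of the tower.
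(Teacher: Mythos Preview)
Your proof is correct and follows exactly the approach the paper indicates: the paper only gestures at the argument by saying ``Arnold's calculation of $H^*(F_n(\mathbb{C}),\ZZ)$ can be extended to $H^*(F_n(\CC\setminus k),\ZZ)$ via the fiber bundles $F_n(\CC\setminus k)\to F_{n-1}(\CC\setminus k)$ with fiber $\CC\setminus(k+n-1)$'' and then cites the result, while you spell out the Leray--Hirsch step with the explicit Arnold forms $\omega_j,\eta_l$ that makes the induction go through.
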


\section{Virtual Poincaré Polynomials of \texorpdfstring{$F_n(\mathbb{C} \setminus k)$}{} }

We have \[S(\mathbb{C} \setminus k) = S(\mathbb{CP}^1 \setminus k+1) = x^2+1-(k+1) = x^2-k.\] Using the same fiber bundles or \cite[Theorem, page 2]{getzler} we get
\begin{thm} The virtual Poincaré polynomials of $F_n(\CC \setminus l)$ is given by
	\[S(F_n(\mathbb{C} \setminus k)) = (x^2-k)(x^2-k-1)\cdots (x^2-k-n+1).\]
\end{thm}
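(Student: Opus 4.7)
The plan is to proceed by induction on $n$, following the fiber-bundle / stratification approach suggested by the excerpt. The base case $n=1$ is immediate: $F_1(\CC\setminus k)=\CC\setminus k$, and the additivity property of $S$ applied to $\mathbb{CP}^1\setminus(k+1)\subset\mathbb{CP}^1$ (together with $S(\mathbb{CP}^1)=x^2+1$ and $S(\text{point})=1$) gives $S(\CC\setminus k)=x^2-k$, which is exactly the $n=1$ case of the formula.

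For the inductive step I would exploit the forgetful map $\pi\colon F_n(\CC\setminus k)\to F_{n-1}(\CC\setminus k)$ that drops the last coordinate. Rather than invoking local triviality of $\pi$, I would write
\[
F_{n-1}(\CC\setminus k)\times(\CC\setminus k)\ =\ F_n(\CC\setminus k)\ \sqcup\ \bigsqcup_{i=1}^{n-1}\Delta_i,
\]
where $\Delta_i=\{(x_1,\dots,x_{n-1},x_n)\mid x_n=x_i\}$. The key observation, which I would spell out, is that the subvarieties $\Delta_i$ are pairwise \emph{disjoint}: if $x_n=x_i$ and $x_n=x_j$ for $i\neq j$, then $x_i=x_j$, which is forbidden on $F_{n-1}(\CC\setminus k)$. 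Each $\Delta_i$ is isomorphic to $F_{n-1}(\CC\setminus k)$ via the projection that forgets $x_n$.

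Applying additivity and multiplicativity of the virtual Poincaré polynomial to this decomposition yields
\[
S\bigl(F_{n-1}(\CC\setminus k)\bigr)\cdot S(\CC\setminus k)\ =\ S\bigl(F_n(\CC\setminus k)\bigr)\ +\ (n-1)\cdot S\bigl(F_{n-1}(\CC\setminus k)\bigr),
\]
which rearranges to the recursion
\[
S\bigl(F_n(\CC\setminus k)\bigr)\ =\ \bigl(x^2-k-(n-1)\bigr)\cdot S\bigl(F_{n-1}(\CC\setminus k)\bigr).
\]
The inductive hypothesis then gives the product $(x^2-k)(x^2-k-1)\cdots(x^2-k-n+1)$, as claimed.

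There is essentially no obstacle: the only point that requires a moment of care is the disjointness of the diagonals, which is what allows additivity to be applied without any inclusion–exclusion correction terms. Alternatively, the same recursion can be read off from Getzler's theorem cited in the excerpt, since evaluating his generating-function identity at $S(\CC\setminus k)=x^2-k$ directly produces the falling factorial on the right-hand side; I would mention this as a cross-check but prefer the stratification argument above because it is self-contained.
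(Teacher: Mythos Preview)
Your argument is correct, and the induction via the forgetful map $F_n(\CC\setminus k)\to F_{n-1}(\CC\setminus k)$ is exactly the structure the paper has in mind. The paper's justification is terser: it simply invokes the fiber bundle with fiber $\CC\setminus(k+n-1)$ (equivalently, Getzler's formula) and reads off $S(F_n)=S(F_{n-1})\cdot S(\CC\setminus(k+n-1))$ from multiplicativity over a Zariski locally trivial fibration. Your stratification of $F_{n-1}(\CC\setminus k)\times(\CC\setminus k)$ by $F_n(\CC\setminus k)$ and the disjoint diagonals $\Delta_i$ reproduces the same recursion using only the additivity and product axioms for $S$, so it is slightly more self-contained---you never need to check local triviality or appeal to an external result. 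The trade-off is that the fiber-bundle formulation makes the analogy with the previous section (ordinary Poincar\'e polynomials) transparent, whereas your version hides that parallel but stays closer to the defining properties of the virtual Poincar\'e polynomial.
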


\section{Virtual Poincaré Polynomials of \texorpdfstring{$C_n(\CC \setminus k)$}{}}

As $S( \mathbb{C} \setminus k) = (x^2-k)$,
the calculations of Getzler \cite[Cor. 5.7]{getzler} allow us to conclude 
	\[ \sum_{n \ge 0} S(C_n(\mathbb{C} \setminus k)) \, y^n = \frac{(1-y^2x^2)(1-y)^k}{(1-yx^2)(1-y^2)^k}, \]
	which simplifies to
\begin{thm} \cite{getzler}
	The virtual Poincaré polynomials of $C_n(\mathbb{C} \setminus k)$ are given by the following generating series:
		\[ \sum_{n \ge 0} S(C_n(\mathbb{C} \setminus k)) \, y^n = \frac{(1-y^2x^2)}{(1-yx^2)(1+y)^k}\]
\end{thm}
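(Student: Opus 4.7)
The plan is to apply Getzler's generating-series formula for virtual Poincar\'e polynomials of unordered configuration spaces \cite[Cor.~5.7]{getzler} to $X = \mathbb{C} \setminus k$ and then perform a short algebraic simplification.

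First, I use $S(\mathbb{C} \setminus k) = x^2 - k$, computed from $S(\mathbb{P}^1) = 1 + x^2$ by the scissor property. Getzler's formula expresses $\sum_n S(C_n(X)) y^n$ multiplicatively in the Serre class of $X$: each integer-multiplicity summand $a\cdot x^{2i}$ contributes a factor $\bigl((1 - y^2 x^{2i})/(1 - y x^{2i})\bigr)^{a}$, with negative exponents allowed and interpreted formally. This rule is consistent with the elementary cases $\sum_n S(C_n(\text{pt})) y^n = 1 + y = (1-y^2)/(1-y)$ (so that $a = 1$, $i = 0$ works) and $\sum_n S(C_n(\mathbb{C})) y^n = (1-y^2 x^2)/(1-yx^2)$ (the $a = 1$, $i = 1$ case). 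Feeding in $S(X) = x^2 - k$ (multiplicity $+1$ on $x^2$, multiplicity $-k$ on the point class) produces exactly the unsimplified expression already recorded above the theorem statement,
\[ \sum_{n \ge 0} S(C_n(\mathbb{C} \setminus k)) y^n = \frac{(1 - y^2 x^2)(1 - y)^k}{(1 - y x^2)(1 - y^2)^k}. \]

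Second, I simplify. Factoring $1 - y^2 = (1-y)(1+y)$ gives $(1 - y^2)^k = (1-y)^k (1+y)^k$, so $(1-y)^k/(1 - y^2)^k = (1+y)^{-k}$, and the expression collapses to the claimed form $\dfrac{1 - y^2 x^2}{(1 - yx^2)(1 + y)^k}$.

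No step presents a real obstacle: the substantive input is Getzler's corollary, taken as a black box, and the remainder is cancellation. A useful sanity check is at $k = 0$, where the series becomes $(1 - y^2 x^2)/(1 - y x^2)$, yielding $S(C_n(\mathbb{C})) = x^{2n} - x^{2n-2}$ for $n \ge 2$, consistent with the known weight filtration on the compactly supported cohomology of $C_n(\mathbb{C})$. A second sanity check at $x = -1$ reduces the series to $(1+y)^{1-k}$, giving $\chi_c(C_n(\mathbb{C} \setminus k)) = \binom{1-k}{n}$, which matches the multiplicative Euler characteristic computed from the fibration $F_n(\mathbb{C} \setminus k) \to F_{n-1}(\mathbb{C} \setminus k)$ with fiber $\mathbb{C} \setminus (k+n-1)$.
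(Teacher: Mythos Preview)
Your proof is correct and follows essentially the same route as the paper: compute $S(\mathbb{C}\setminus k)=x^2-k$, feed it into Getzler's Corollary~5.7 to obtain the unsimplified product $\dfrac{(1-y^2x^2)(1-y)^k}{(1-yx^2)(1-y^2)^k}$, and then cancel $(1-y)^k$ against $(1-y^2)^k=(1-y)^k(1+y)^k$. Your added explanation of the multiplicative rule and the sanity checks at $k=0$ and $x=-1$ go beyond what the paper records, but the argument itself is the same.
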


	\section{Comparision}

We observe that under the variable transformation \[ x \to -1/x^2, y \to yx^2\] 
the respective generating series
\begin{align*}  \sum_{n \ge 0} P(C_n(\CC \setminus k)) \, y^n  & & \sum_{n \ge 0}  P(F_n(\CC \setminus k)) \, y^n \end{align*}
transform into 
\begin{align*}  \sum_{n \ge 0} S(C_n(\CC \setminus k)) \, y^n  & & \sum_{n \ge 0} P(F_n(\CC \setminus k)) \, y^n. \end{align*}
This means, in this case the classical and virtual Poincaré polynomials are in some sense dual to each other.
\begin{example}
We look 3-pointed configuration spaces of $\CC \setminus 2$:

	\begin{align*}  P(C_3(\CC \setminus 2)) = 4x^3+5x^2+3x+1 & & P(F_3(\CC \setminus 2))= 24x^3+26x^2+9x+1 \end{align*}

		\begin{align*}  S(C_3(\CC \setminus 2)= x^6-3x^4+5x^2-4  & & S(F_3(\CC \setminus 2)) = x^4-9x^4+26x^2-24 \end{align*}
\end{example}


\bibliographystyle{alpha}
\bibliography{thesis}
\end{document}